%
%
%
%
\documentclass{amsart}
\usepackage{graphicx}
\newtheorem{theorem}{Theorem}
\newtheorem{proposition}{Proposition}

\theoremstyle{definition}
\newtheorem{definition}{Definition}
\newtheorem{example}{Example}

\theoremstyle{remark}
\newtheorem{remark}{Remark}

\numberwithin{equation}{section}



\begin{document}

\title{A short proof of the converse to a theorem of Steinhaus}

\author{Dang Anh Tuan}
\address{Department of Mathematics-Mechanics-Informatics, Hanoi University of Science, VNU}
\email{danganhtuan@hus.edu.vn}

\subjclass[2000]{Primary 28A75}

\date{August 03, 2015}          

\keywords{Absolutely continuous measure, difference set, Steinhaus theorem}

\begin{abstract}
A result of H. Steinhaus states that any positive Lebesgue measurable set has a property that its difference set contains an open interval around the origin. Y. V. Mospan proved that this result is the characterization of absolutely continuous measure. In this note we give a short proof of it.
\end{abstract}
\maketitle

Firstly, let me recall some facts from the paper "A converse to a theorem of Steinhaus" of Y. V. Mospan (\cite{M}).\\
\begin{definition}
A Borel measure $\mu$ on $\mathbb R$ is said to have Steinhaus property (abbreviated SP) if for every $\mu-$measurable set $A$ with $\mu(A)>0$ the difference $A-A$ contains an open interval around zero. 
\end{definition}
\begin{example}
H. Steinhaus stated that Lebesgue measure $m$ has SP property (\cite{S}).\\
It is easy to see Dirac measure $\delta$ and the Cantor measure do not have SP property. 
\end{example}
\begin{remark}
It is easy to see that a Borel measure $\mu$ is not SP if and only if there is Borel set $A$ with $\mu(A)>0$, and a sequence $\{t_n\}_{n=1}^\infty$ converging to $0$ such that
$$ A\cap (A+t_n)=\emptyset. $$ 
Note that "Borel set" can be replaced by "compact set".
\end{remark}
We give now some other characterizations of Borel measures which are not SP.
\begin{proposition}
Let $\mu$ be a Borel measure on $\mathbb R.$ Then the following statements are equivalent.
\begin{itemize}
\item[(i)] The Borel measure $\mu$ is not SP.
\item[(ii)] There is a compact set $A\subset \mathbb R$ such that $\mu(A)>0$ and the Lebesgue measure $m(A)$ of $A$ is zero.
\item[(iii)] There is a compact set $A\subset\mathbb R$ with $\mu(A)>0$ and a sequence $\{t_n\}_{n=1}^\infty$ converging to $0$ such that
$$ \lim\limits_{n\to\infty}\mu(A+t_n)=0. $$
\end{itemize}
\end{proposition}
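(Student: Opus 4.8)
The plan is to establish the cycle $(i)\Rightarrow(ii)\Rightarrow(iii)\Rightarrow(i)$. Throughout I take $\mu$ to be locally finite, hence $\sigma$-finite and finite on compact sets; this is the standing convention for results of this type, and it is exactly what legitimizes the Fubini step below. The Remark preceding the Proposition already carries most of hypothesis $(i)$: if $\mu$ is not SP, it provides a \emph{compact} set $A$ with $\mu(A)>0$ and a sequence of nonzero reals $t_n\to0$ with $A\cap(A+t_n)=\emptyset$, equivalently $t_n\notin A-A$.

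For $(i)\Rightarrow(ii)$ I would take such $A$, $\{t_n\}$ and observe that necessarily $m(A)=0$: otherwise the classical Steinhaus theorem recalled in the Example would give an open interval $(-\varepsilon,\varepsilon)\subseteq A-A$, whence $t_n\in A-A$ for all large $n$, contradicting $t_n\notin A-A$. So this $A$ is already the compact set required in $(ii)$. For $(iii)\Rightarrow(i)$ I would start from a compact $A$ with $\mu(A)>0$ and $t_n\to0$ with $\mu(A+t_n)\to0$; since $\mu(A)<\infty$, after passing to a subsequence I may assume the $t_n$ are nonzero and $\sum_{n}\mu(A+t_n)\le\frac12\mu(A)$. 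Then $B:=A\setminus\bigcup_{n}(A+t_n)$ is a Borel set with $\mu(B)\ge\mu(A)-\sum_{n}\mu(A+t_n)>0$, it meets no $A+t_n$, hence (being contained in $A$) meets no $B+t_n$; by the Remark, $\mu$ is not SP.

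The core is $(ii)\Rightarrow(iii)$, which I would prove by averaging over translations. Let $A$ be compact with $\mu(A)>0$ and $m(A)=0$. The function $(x,t)\mapsto\chi_A(x-t)$ is Borel on $\mathbb R^2$, so $t\mapsto\mu(A+t)=\int\chi_A(x-t)\,d\mu(x)$ is Borel, and for every $\delta>0$ Tonelli's theorem gives
\[
\int_{-\delta}^{\delta}\mu(A+t)\,dt=\int_{\mathbb R}\Big(\int_{-\delta}^{\delta}\chi_A(x-t)\,dt\Big)d\mu(x)=\int_{\mathbb R}m\big(A\cap[x-\delta,x+\delta]\big)\,d\mu(x)=0,
\]
since $m(A\cap[x-\delta,x+\delta])\le m(A)=0$ for every $x$. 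As the integrand is nonnegative, $\mu(A+t)=0$ for Lebesgue-almost every $t\in[-\delta,\delta]$; letting $\delta\to\infty$, the set $\{t:\mu(A+t)>0\}$ is Lebesgue-null, so I can pick nonzero $t_n\to0$ outside it, which gives $\mu(A+t_n)=0$ --- even stronger than $(iii)$ requires.

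I expect the only genuine obstacle to be the interchange of integrals in $(ii)\Rightarrow(iii)$: one must know that $t\mapsto\mu(A+t)$ is Borel measurable and that Tonelli applies, which is precisely why I build in local finiteness (hence $\sigma$-finiteness) of $\mu$; after that, the measurability claim and the computation are routine. The other two implications are short, resting only on the Remark and on Steinhaus's classical theorem.
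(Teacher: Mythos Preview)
Your argument is correct and follows the same cycle $(i)\Rightarrow(ii)\Rightarrow(iii)\Rightarrow(i)$ as the paper, with $(i)\Rightarrow(ii)$ via Steinhaus's theorem and $(iii)\Rightarrow(i)$ via the subsequence-and-excision construction being essentially identical to the paper's. The one place your presentation diverges is $(ii)\Rightarrow(iii)$: the paper first reduces to $\mu$ of compact support, then regards $\mu$ and $m$ as distributions and writes the same computation as the convolution identity $m*(\mu*\chi_{-A})=\mu*(m*\chi_{-A})=0$ (using that $m*\chi_{-A}$ is the constant $m(A)=0$), concluding that $(\mu*\chi_{-A})(t)=\mu(A+t)$ vanishes for Lebesgue-a.e.\ $t$. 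Your direct Tonelli computation is exactly this associativity unwound; it is slightly more elementary in that it needs only $\sigma$-finiteness rather than a reduction to compact support, and avoids the language of distributions, at the cost of the one-line conceptual explanation the convolution viewpoint provides.
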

\begin{proof}
We may assume that the support of $\mu$ is compact.\\
Firstly, we prove that (i) implies (ii). It is a easy consequence of Steinhaus Theorem. For the completion we give a short reason here. From the Remark 1 there is a compact set $A$ with $\mu(A)>0$ and a sequence $\{t_n\}_{n=1}^\infty$ converging to $0$ such that
$$ A\cap (A+t_n)=\emptyset. $$ 
So the Lebesgue measure $m(A\cap(A+t_n))=0$ for all $n\in\mathbb N.$ Note that
$$ m(A\cap (A+t_n))=\int_A\chi_{-A}(t_n-x)dm(x)=(\chi_A*\chi_{-A})(t_n),  m(A)=(\chi_A*\chi_{-A})(0)$$
and the map $t\mapsto (\chi_A*\chi_{-A})(t)$ is continuous from $\mathbb R$ to $\mathbb R.$  So (i) implies (ii).\\
Next, we show that (ii) implies (iii). We can consider $\mu$ and $m$ as  generalized functions 
$$ \mu: \varphi\in\mathcal D(\mathbb R)\mapsto \int\limits_{\mathbb R}\varphi(x)d\mu(x), $$
$$ m:  \varphi\in\mathcal D(\mathbb R)\mapsto \int\limits_{\mathbb R}\varphi(x)dm(x).$$
Then $\mu$ has compact support and $m$ is translation-invariant. Especially we have $m*\chi_{-A}$ is constant function $m(A)=0.$ Besides $\mu, \chi_{-A}$ has compact support so
$$m*(\mu*\chi_{-A})=\mu*(m*\chi_{-A})=0.  $$
Note that $(\mu*\chi_{-A})(x)=\mu(A+x)$ is a nonnegative function with compact support. Therefore $\mu*\chi_{-A}=0$ Lebesgue almost everywhere on $\mathbb R.$ Hence we get (iii).\\
Finally we show that (iii) implies (i). Let $A$ be a compact set satisfying (iii). Since
$$ \lim\limits_{n\to\infty}\mu(A+t_n)=0 $$
there is a subsequence $\{t_{n_k}\}_{k=1}^\infty$ such that
$$ \mu(A+t_{n_k})<\mu(A)/2^k. $$
Put $A'=A\setminus(\cup_{k=1}^\infty (A+t_{n_k}))$ we have
\begin{itemize}
\item $\mu(A')\ge \mu(A)-\sum\limits_{k=1}^\infty \mu(A+t_{n_k})>\mu(A)/2,$
\item $A'\cap (A'+t_{n_k})=\emptyset, \forall k\in\mathbb N,$
\item $\lim\limits_{k\to\infty}t_{n_k}=0.$
\end{itemize}
From the Remark 1 we get (i).
\end{proof}
From (ii) of the above Proposition we obtained the converse to theorem of Steinhauss.
\begin{theorem}
A Borel measure $\mu$ is SP if and only if $\mu$ is absolutely continuous to the Lebesgue measure $m.$
\end{theorem}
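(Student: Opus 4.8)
The plan is to read the theorem off directly from the equivalence (i)$\Leftrightarrow$(ii) of the Proposition, so that the only genuine work is a passage between \emph{Borel} null sets and \emph{compact} null sets via regularity of $\mu$. Concretely, the Proposition tells us that $\mu$ is SP if and only if there is \emph{no} compact set $A$ with $\mu(A)>0$ and $m(A)=0$; absolute continuity $\mu\ll m$ says there is \emph{no} Borel set $E$ with $\mu(E)>0$ and $m(E)=0$. These two statements are visibly the same up to replacing ``compact'' by ``Borel'', and that gap is closed by inner regularity.

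For the ``if'' direction I would argue: suppose $\mu\ll m$. If $\mu$ were not SP, then by part (ii) of the Proposition there is a compact set $A$ with $\mu(A)>0$ and $m(A)=0$; but $m(A)=0$ forces $\mu(A)=0$, a contradiction. Hence $\mu$ is SP. (In particular, taking $\mu=m$ recovers the original statement of Steinhaus.)

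For the ``only if'' direction I would argue by contraposition: assume $\mu\not\ll m$, so there is a Borel set $E\subset\mathbb R$ with $m(E)=0$ and $\mu(E)>0$. Writing $E=\bigcup_{n\in\mathbb N}(E\cap[-n,n])$ and using countable subadditivity of $\mu$, we may pass to $E\cap[-n,n]$ for a suitable $n$ and thereby assume $E$ is bounded with $0<\mu(E)<\infty$. Since $\mu$ is a (locally finite) Borel measure on $\mathbb R$, it is inner regular on Borel sets of finite measure, so there is a compact set $A\subseteq E$ with $\mu(A)>\mu(E)/2>0$; moreover $m(A)\le m(E)=0$. Thus $A$ witnesses part (ii) of the Proposition and $\mu$ is not SP, completing the contrapositive.

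The only delicate point — and the step I would be most careful about — is the appeal to inner regularity of $\mu$: one should either fold it into the meaning of ``Borel measure'' (understood as locally finite, hence Radon on the Polish space $\mathbb R$) or explicitly invoke the standard fact that every locally finite Borel measure on $\mathbb R$ is tight. Everything else is a two-line deduction from the Proposition, which is precisely what makes the proof short.
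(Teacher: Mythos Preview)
Your proof is correct and takes exactly the paper's approach: the paper derives the Theorem in a single sentence---``From (ii) of the above Proposition we obtained the converse to theorem of Steinhaus''---and your argument simply spells out what that sentence means, including the passage from a Borel witness of $\mu\not\ll m$ to a compact one via inner regularity. If anything, you have supplied more detail than the paper itself.
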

\begin{remark}
It is not difficult to have the same theorem for Borel measures on locally compact groups with Haar measure as the result of S. M. Simmons (\cite{Si}).
\end{remark}
\bibliographystyle{amsplain}

\end{document}